\newtheorem{thm}{Theorem}[section]
\newtheorem{lem}{Lemma}[section]
\newtheorem{prop}{Proposition}[section]
\theoremstyle{remark}
\newtheorem{remark}{Remark}[section]
\newtheorem{claim}{Claim} [section]
\newtheorem{prob}{Problem} [section]
\newcommand{\FF}{{\mathbb F}}
\begin{document}
\title[On a theorem of Castelnuovo]{On a theorem of Castelnuovo 
and applications to moduli}
\author{Abel Castorena}  
\address{Instituto de Matem\'aticas(Universidad Nacional Aut\'onoma de M\'exico)\\
Unidad Morelia\\
Apdo. Postal 61-3 (Xangari). C.P. 58089\\
Morelia, Michoac\'an, M\'exico.}
\email{abel@matmor.unam.mx}

\author{Ciro Ciliberto}
 \address{Dipartimento di Matematica,
Universit\`a di Roma Tor Vergata,
Via della Ricerca Scientifica, 00133 Roma, Italy.}
\email{cilibert@mat.uniroma2.it}

\thanks{The first author has been partially supported by CONACYT(M\'exico) Grants 058486, 48668 and PAPIIT(UNAM) Grant IN100909-2. The second author is a member of GNSAGA of INDAM}
\subjclass[2000]{Primary 14C20. Secondary 14J26.}
\keywords{Castelnuovo theorem,  moduli of curves.}

\maketitle

\begin{abstract} In this paper we prove a theorem stated by Castelnuovo in \cite {CA2} which bounds the dimension
of linear systems of plane curves in terms of  two invariants, one of  which is the genus of the curves in the system.
This extends a previous result of Castelnuovo--Enriques (see \cite {CR}).  We classify linear systems
whose dimension belongs to certain intervals which naturally arise from Castelnuovo's theorem. Then 
we make an application to the following moduli problem:
what is the maximum number of moduli of curves of geometric genus $g$ varying in a linear system on a surface? It turns out that, for $g\ge 22$, the answer
is $2g+1$, and it is attained by trigonal canonical curves varying on a balanced rational normal scroll. 
\end{abstract}
\section*{Introduction}

This paper has been originated by the following problem (see Problem \ref {prob:moduli}).  Consider the set $\mathcal X_g$, with $g\geq 2$,  of all linear systems $\mathcal L$ of curves on a surface $X$ such that the general curve
of $\mathcal L$ is irreducible, with geometric genus $g$. 
For such an $\mathcal L$, consider its image in $\mathcal M_g$  via the obvious (rational) moduli map.  
What is the maximum dimension of this image (called the \emph{number of moduli} of $\mathcal L$) when $\mathcal L$ varies in $\mathcal X_g$? 

A naive expectation is that, the larger the dimension of  $\mathcal L$, the larger its number of moduli. So a related question is:
what is the maximum of $r=\dim(\mathcal L)$ as $\mathcal L$ varies in $\mathcal X_g$? This has a classical answer which goes back to Castelnuovo \cite {CA1} and Enriques \cite {En}. They proved  an important result (see Theorem \ref
{thm:CastEnr}) to the effect that $r\le 3g+5$, with three exceptions wich, up to birational equivalence, are the following: either $\mathcal L$ is the linear system of plane cubics or the rational map determined by $\mathcal L$ realizes $X$ as a scroll. In the former case the number of moduli is 0, in the latter it is 1. Castelnuovo and Enriques also classified the cases in which
the bound $r=3g+5$ is attained: $X$ is then rational and $\mathcal L$ is either (up to birational equivalence) the linear system of plane curves of degrees $2$ or $4$ or a suitable system of hyperelliptic curves. Castelnuovo--Enriques' theorem has been rediscovered and/or reconsidered a few times in the course of the years: see \cite{CR} for classical and
more recent references. 

At about the same time, Castelnuovo stated in  \cite{CA2}, with a rather sketchy proof, a more general and interesting theorem which classifies linear systems on rational surfaces with $r>g$ (see Theorem \ref {thm:Castthm}). Castelnuovo's argument is
 based on an ingenuous application of adjunction and on a basic inequality (see Theorem \ref {thm:Castineq}) which  improves the original Castelnuovo--Enriques theorem. Castelnuovo's Theorem 
 \ref {thm:Castthm} is a very interesting result in birational geometry of surfaces, and more recent developments, e.g. \cite[Corollary (1.1)]  {R}, are reminiscent of it.  Section \ref {sec:Castthm} is devoted to prove, following and clarifying Castelnuovo's original idea,  Castelnuovo's inequality and  theorem. 
 
Castelnuovo's theorem applies to our original moduli problem, which we take up in \S \ref {sec:moduli}, where we answer our original problem, at least when $g$ is large enough.
 We prove (see Theorem \ref {thm:moduli}) that the maximum number of moduli
 of a linear system of curves of genus $g\ge 22$ is $2g+1$ and it is attained by the linear systems of trigonal canonical curves on a balanced rational normal scroll in 
$\Bbb P^ {g-1}$ (the bound $g\ge 22$ could be improved, but we thought it useless to dwell on this here). 
It is remarkable that this maximum  is not achieved by linear systems of the largest dimension $3g+5$ compatible with a non--trivial map to moduli: indeed, as we said, they consist of hyperelliptic curves, and in fact they dominate the hyperelliptic locus, which has dimension $2g-1$ (see Theorem  \ref {prop:modcast}).
The proof of Theorem \ref {thm:moduli} relies on Castelnuovo's theorem, on the concept of
\emph{Castelnuovo pairs},  on their classification and 
related computation of  moduli (see \S \ref {ssec:maxdim}). 

In conclusion, it is worth mentioning, on the same lines as the problem 
considered here, another more fascinating and complicated one (attributed to F. O. Schreyer): what is,  for large enough $g$,  the maximum dimension of a rational [or, respectively unirational, uniruled, rationally connected] subvariety of $\mathcal M_g$?

 \section*{Notation, conventions and generalities}\label{sec:gen}      
We use standard notation in algebraic geometry. In particular, the simbol $\equiv$ denotes linear equivalence of divisors.  If $D$ is a divisor on a smooth, projective variety
$X$, $|D|$ is the complete linear system of $D$. 
If $\mathcal L$ is a linear system of divisors on $X$ of dimension $r$,  $\phi_{\mathcal L}:X\dasharrow \Bbb P^r$ is the  rational map defined by $\mathcal L$. The system $\mathcal L$ is said to be \emph{simple} if $\phi_{\mathcal L}$ maps $X$ birationally to its image. 

 
Let $X$ be a smooth irreducible projective surface. As usual we denote by $K:=K_X$ a \emph{canonical divisor}, $q:=q(X):=h^1(X,\mathcal O_X)$ the {\it{irregularity}}, $p_g:=p_g(X):=h^0(X,\mathcal O_X(K))$  the {\it{geometric genus}} of $X$.

Let $D$ be a divisor on $X$. 
We will say that $D$ is a ${\it{curve}}$ on $X$ if it is effective. If $D$ is a reduced curve on $X$,  
the {\it{geometric genus}} $g$ of $D$ is the arithmetic genus of the normalization of $D$. 
Often we will simply call $g$  the \emph{genus} of $D$.  We will use the notation $d=D^ 2$ and $r=\dim(\vert D\vert)$.
Moreover $D'\equiv K+D$ is an \emph{adjoint divisor}  and $\vert D'\vert$ the \emph{adjoint linear system} to $D$.
The system $\vert D\vert$ is called \emph{non--special} if it is either  empty or $h^ 1(X,\mathcal O_X(D))=0$.

Suppose there is a morphism $f: X\to Y$, contracting a curve $C$ of $X$ to a smooth point $p$ of a surface $Y$ and induces an isomorphism between $X-C$ and $Y-\{p\}$. The divisor $E$, supported on $C$, which is the scheme theoretical fibre of $f$ over $p$, is called a $(-1)$--\emph{cycle}, or a $(-1)$--\emph{curve} if $E=C$ is irreducible. 

We will consider pairs $(X,D)$, with $X$ a smooth irreducible projective surface and $D$ a curve on it. We will extend attributes of $D$ (like being \emph{nef, big, ample} etc.) or of $\vert D\vert$ (like being \emph{simple, special, very ample} etc.) to the pair $(X,D)$. We say that $(X,D)$ is:

\begin{itemize}

\item   {\it{minimal}} if there is no (-1)--curve $C$ on $X$ such that $D\cdot C=0$;

\item  a  {\it{h-scroll}}, if there is a smooth rational curve $F$ on $X$ such that $F^2=0$ and $D\cdot F=h$. A 1-scroll will be simply called a {\it{scroll}}.
\end{itemize}


There are obvious notions of morphism, isomorphism, rational and birational maps between pairs (see \cite {CaCi}). We are mainly interested in birational invariants of the linear system $\vert D\vert$ on $X$. 
If  $\vert D\vert$ has no fixed curves and its general curve is irreducible, then by blowing up the base locus of $\vert D\vert$
we may assume $\vert D\vert$ is base point free and the general curve of $D$ is smooth. So we will often assume this is the case.
In addition we may assume $(X,D)$ is minimal by successively contracting all $(-1)$--curves $E$ with $D\cdot E=0$. 

If $X\cong\Bbb P^2$ and $\ell$ is a line, the pair $(X,D)$ with $D\equiv m\ell$ will be called a \emph{$m$--Veronese pair}. 


As usual, we will denote by $\Bbb F_a$ the \emph{Hirzebruch surface} $\Bbb P(\mathcal O_{\Bbb P^1}\oplus\mathcal O_{\Bbb P^1}(-a))$. The Picard group of $\Bbb F_a$ is freely generated by the classes of the divisors:
$E$, a curve with $E^ 2=-a$ (unique if $a>0$), and $F$,  a \emph{ruling}, i.e. a fibre of the structure morphism $f:\Bbb F_a\to\Bbb P^1$. 
One has $F^2=0, F\cdot E=1$.  A divisor $D\equiv \alpha E+\beta F$ is nef as soon as $D\cdot E=\beta-a\alpha \ge 0$. 
If $\alpha=1$ and $\beta\ge a$ then $\phi_{\vert D\vert}$ birationally maps $\Bbb F_a$ to a \emph{rational normal scroll} of degree $s-1$ in $\Bbb P^ s$, with $s=2\beta-a+1$. 
A pair $(X,D)$ with $X\cong \Bbb F_a$ and $D\equiv 2E+(a+g-1)F$ is nef, the general curve in $\vert D\vert$ is smooth of genus $g$ and $r=3g+5$. Such a pair is called a $(a,g)$-{\it{Castelnuovo pair}} (see \cite {CR}).

\section{Castelnuovo's theorem}\label{sec:Castthm}

\subsection{Castelnuovo--Enriques  theorem}\label {ssec:CastEnr}

We recall  the following theorem which extends 
results of  Castelnuovo \cite {CA1} and Enriques \cite {En}  (see  \cite[Theorem 7.3] {CR} and  \cite{CR} also 
for classical and recent references):

\begin{thm} [Castelnuovo-Enriques theorem]\label{thm:CastEnr} Let $(X,D)$ be a pair 
with $D$ an irreducible curve.  Assume $d> 0$ and
$(X,D)$ not a  scroll. Then:\
 \begin{equation} \label{grado} d\leq 4g+4+\epsilon
\end{equation}
\noindent where $\epsilon=1$ if $g=1$ and $\epsilon=0$ if $g\neq
1$. Consequently one has:
\begin{equation} \label{erre} r\leq 3g+5+\epsilon
\end{equation}
\noindent and the equality holds in (\ref {grado}) if and only if
it holds in (\ref{erre}).\par
 If, in addition, the pair $(X,D)$ is minimal, then the equality
holds in (\ref {erre}), if and
only if one of the following happens:\par
\begin{itemize}
\item [(i)] $g=0$, $r=5$, and $(X,D)$ is a $2$--Veronese pair;\par
\item [(ii)] $g=1$, $r=9$, and $(X,D)$ is a $3$--Veronese
pair;\par
\item [(iii)] $g=3$, $r=14$, and $(X,D)$ is a $4$--Veronese
pair;\par
\item [(iv)] $(X,D)$ is a $(2,n+g+1)$--Castelnuovo pair on $X\cong  \FF_n$,
$n\geq 0$.\end{itemize} \end{thm}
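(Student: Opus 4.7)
\emph{The plan.} I would follow the classical Castelnuovo--Enriques strategy via adjunction and the analysis of the image of $\phi:=\phi_{\vert D\vert}$. After resolving base points I may assume $\vert D\vert$ is base-point-free with smooth general member $C$ of genus $g$. Writing $\phi\colon X\to Y\subset\Bbb P^r$, the hypothesis that $(X,D)$ is not a scroll forces $\dim Y=2$; setting $\mu=\deg\phi$ and $\nu=\deg Y$ gives $d=\mu\nu$ together with the nondegeneracy bound $\nu\ge r-1$.

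\emph{Deriving the two inequalities.} A general hyperplane section $\Gamma$ of $Y$ is a nondegenerate integral curve in $\Bbb P^{r-1}$ of degree $\nu$ and geometric genus $p$. Castelnuovo's classical bound gives $p\le\pi(\nu,r-1)$, while the genus of $C$ relates to $p$ through $\mu$: one has $g=p$ when $\mu=1$, and a Riemann--Hurwitz-type correction must be tracked when $\mu\ge 2$. Inserting these estimates into $\nu\ge r-1$ and $d=\mu\nu$ produces the two inequalities $r\le 3g+5+\epsilon$ and $d\le 4g+4+\epsilon$ at once, and makes transparent that equality in either one forces equality throughout the chain, hence equality in the other; the $+\epsilon$ correction in the case $g=1$ reflects the fact that the Castelnuovo curve bound is achieved one higher by elliptic normal curves.

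\emph{Equality and classification.} Equality forces $\nu=r-1$, so $Y$ is a surface of minimal degree, and $\Gamma$ must be an extremal Castelnuovo curve. Del Pezzo's classification then restricts $Y$ to be $\Bbb P^2$, the Veronese surface in $\Bbb P^5$, or a rational normal scroll. Using the minimality of $(X,D)$ to contract all $(-1)$-curves orthogonal to $D$, and matching the numerics $(g,r,d)$ against each alternative, should produce exactly the four cases (i)--(iv). The main obstacle is the non-birational regime $\mu\ge 2$: one must show that the only extremal non-birational pair arises from a double cover of a rational normal scroll giving rise to case (iv), which requires a careful analysis of the branch locus and ramification, combined with the minimality hypothesis on $(X,D)$ to rule out spurious blow-ups and to pin down the linear equivalence class of $D$ on $\Bbb F_n$.
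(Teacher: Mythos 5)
There is a genuine problem here, and it is worth noting first that the paper does not prove this statement at all: Theorem \ref{thm:CastEnr} is quoted from \cite[Theorem 7.3]{CR}, and the only hint the paper gives about its proof is in \S\ref{ssec:Castineq}, where inequality \eqref{grado} is obtained by adjunction: $K+D$ is nef (\cite[Proposition 7.1]{CR}), hence $0\le (K+D)^2=4g-4-d+K^2$, i.e.\ $d\le 4g-4+K^2\le 4g+5$, which is then refined using $K^2\le 9$ with equality only for $\Bbb P^2$. Your route is entirely different, and unfortunately its central step goes in the wrong direction. Castelnuovo's bound for space curves gives an \emph{upper} bound $p\le\pi(\nu,r-1)$ on the genus of the hyperplane section in terms of its degree, whereas the theorem asserts a \emph{lower} bound on $g$ in terms of $d$ (equivalently $d\le 4g+4+\epsilon$); no chain of the inequalities you list ($d=\mu\nu$, $\nu\ge r-1$, $p\le\pi(\nu,r-1)$, $g\ge$ a Hurwitz expression in $p$ and $\mu$) can produce an upper bound on $d$ in terms of $g$. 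A second, decisive symptom: you use the non-scroll hypothesis only to force $\dim Y=2$, but that is not what ``scroll'' means here (it means there is a rational curve $F$ with $F^2=0$, $D\cdot F=1$). The pair $(\Bbb F_0,\,E+mF)$ has two-dimensional, nondegenerate image (a rational normal scroll surface in $\Bbb P^{2m+1}$), $g=0$ and $d=2m$, which violates \eqref{grado} for $m\ge 3$; since your argument would apply verbatim to it, the argument cannot be correct. Any valid proof must use the scroll exclusion substantively, e.g.\ via the adjoint system $|K+D|$ (whose dimension is exactly $g-1$ when $p_g=q=0$ and $|D|$ is non-special), as in \cite{CR} and in \S\ref{ssec:Castineq} of the paper.

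The equality analysis is also off. Equality $d=4g+4$ does not force $\nu=r-1$: in case (iv) one has $D\equiv 2E+(n+g+1)F$ on $\Bbb F_n$, so $d=4g+4$ and $r=3g+5$, and $\phi_{|D|}$ embeds each ruling as a conic, hence is birational onto a surface of degree $4g+4$ in $\Bbb P^{3g+5}$ --- well above the minimal degree $3g+4$. So Del Pezzo's classification of surfaces of minimal degree does not apply, and your description of case (iv) as arising from a \emph{double cover} of a rational normal scroll is incorrect: the curves in $|D|$ are hyperelliptic, but the surface map itself has $\mu=1$. To recover the classification one instead analyses when $(K+D)^2=0$ together with $K^2\in\{8,9\}$ and the structure of the (nef) adjoint system, which is what singles out the Veronese pairs (i)--(iii) and the $(2,n+g+1)$--Castelnuovo pairs (iv).
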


\subsection{Castelnuovo's inequality}\label{ssec:Castineq}

In this section we prove a result of Castelnuovo \cite{CA2}, which 
specifies \eqref {grado}. 

We consider here minimal pairs $(X,D)$ with $p_g=q=0$, $D$ an irreducible, smooth curve of genus $g\geq 2$,  
with  $d\ge 1$ and $r\ge 1$, hence $D$ is nef. By  \cite[Proposition 7.1]{CR},  an adjoint curve $D'\equiv K+D$ is nef and 
\begin{equation}\label {eq:a}
d\le 4(g-1)+K^ 2\le 4g+5
\end{equation}
which is basically the proof of \eqref {grado} (in the last inequality we used Miyaoka--Yau inequality). Moreover
$\dim(\vert D'\vert)=g-1$.  Set $\vert D'\vert =P+\vert M\vert$, where $P$ is the fixed divisor and $\vert M\vert$ is the movable part, called the \emph{pure adjoint system} of $D$. 
We set $g':=p_a(M)$ and $d'=M^2$. One has $M\cdot D=2g-2$ and $P\cdot D=0$ and for all curves  $E\leq P$ one has $E^2<0$. 

\begin{lem}\label{lem:nonspec} In the above setting, if $d\ge 5$ and $\vert D\vert$ is non--special, then $P=0$.\end{lem}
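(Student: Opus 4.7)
My plan is to argue by contradiction: assume $P\ne 0$, and derive a contradiction with the fact that $|D|$ has no fixed curves (automatic since $D\in|D|$ is irreducible with $r\ge 1$). The target is to show $h^0(\mathcal O_X(D-P))=h^0(\mathcal O_X(D))$, which would force $P$ to be a fixed component of $|D|$. First I would establish that the pure adjoint $|M|$ cuts the complete canonical series $|K_D|$ on $D$: since $p_g=0$ and $P$ is a nonzero effective divisor, the class $K-P=M-D$ cannot be effective (else $K\equiv (K-P)+P$ would be), so $h^0(\mathcal O_X(M-D))=0$; the exact sequence
\[
0\to \mathcal O_X(M-D)\to \mathcal O_X(M)\to \mathcal O_D(M|_D)\to 0
\]
then gives an injection $H^0(X,M)\hookrightarrow H^0(D,M|_D)$, whence $h^0(D,M|_D)\ge g$ on the genus-$g$ curve $D$. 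Clifford's theorem applied to the degree-$(2g-2)$ line bundle $M|_D$ forces $M|_D\equiv K_D$, and by adjunction $\mathcal O_D(P)\cong \mathcal O_D$.

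Next, each irreducible component $E$ of $P$ is distinct from $D$ (since $E^2<0<D^2$) and satisfies $E\cdot D=0$ (from $D$ nef with $P\cdot D=0$), so $D\cap P=\emptyset$ set-theoretically and $\mathcal O_P(D)\cong \mathcal O_P$. From $r\ge 1$ and non-speciality, $d\ge g$, so $(K-D)\cdot D=2g-2-2d<0$ and hence $h^0(\mathcal O_X(K-D))=h^0(\mathcal O_X(K-D+P))=0$. Combining the exact sequence $0\to \mathcal O_X(D-P)\to \mathcal O_X(D)\to \mathcal O_P\to 0$ with $h^1(\mathcal O_X(D))=0$ then yields both $h^1(\mathcal O_P)=0$ (so $P$ is a disjoint union of trees of $(-n)$-curves with $n\ge 2$, by minimality of $(X,D)$) and the key identity
\[
h^0(\mathcal O_X(D))-h^0(\mathcal O_X(D-P))=h^0(\mathcal O_P)-h^1(\mathcal O_X(D-P)).
\]

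The target now reduces to showing $h^1(\mathcal O_X(D-P))\ge h^0(\mathcal O_P)$: combined with the identity above and the automatic $h^0(\mathcal O_X(D-P))\le h^0(\mathcal O_X(D))$, this gives equality — and hence $P$ a fixed component — yielding the contradiction. By Serre duality $h^1(\mathcal O_X(D-P))=h^1(\mathcal O_X(K-D+P))$; the exact sequence
\[
0\to \mathcal O_X(K-D)\to \mathcal O_X(K-D+P)\to \omega_P\to 0
\]
(the quotient is $\omega_P$ by adjunction on $P$ together with $\mathcal O_P(D)\cong \mathcal O_P$), together with $h^1(\mathcal O_X(K-D))=0$ and $h^0(\omega_P)=h^1(\mathcal O_P)=0$, reduces the task to showing the coboundary $H^1(\omega_P)\to H^2(\mathcal O_X(K-D))$ is zero — dually, via Serre duality on $X$ and on $P$, the vanishing of the restriction $H^0(\mathcal O_X(D-K))\to H^0(\mathcal O_P(D-K))$.

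The main obstacle is this last restriction vanishing, which is where the hypothesis $d\ge 5$ should decisively enter. The class $D-K$ is positive ($(D-K)\cdot D=2d-2g+2\ge 2$), and $d\ge 5$ should ensure sufficient sections of $|D-K|$ to enforce trivial restriction to the negative tree $P$, using the explicit configuration (tree of $(-n)$-curves, $n\ge 2$) of its components. For $d\le 4$ counterexamples genuinely exist (e.g., adjoint fixed parts arising from $(-2)$-configurations over double points of small plane models), so the bound is sharp and I expect this step to absorb most of the technical work.
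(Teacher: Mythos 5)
Your proposal has a genuine gap, and in fact the chain of reductions is circular. Your strategy is: assume $P\neq 0$ and show $h^0(\mathcal O_X(D-P))=h^0(\mathcal O_X(D))$, i.e.\ that the restriction map $\rho\colon H^0(X,\mathcal O_X(D))\to H^0(P,\mathcal O_P(D))\cong H^0(P,\mathcal O_P)$ vanishes. After several correct but ultimately idle computations (that $M\vert_D\cong K_D$ and $\mathcal O_D(P)\cong \mathcal O_D$; that $P\cap D=\emptyset$; that $h^1(\mathcal O_P)=0$), you reduce to showing that a certain coboundary $H^1(\omega_P)\to H^2(\mathcal O_X(K-D))$ vanishes. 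But Serre duality gives $H^2(X,\mathcal O_X(K-D))^*\cong H^0(X,\mathcal O_X(D))$ (you dropped the $\omega_X$ twist when you wrote $H^0(\mathcal O_X(D-K))$), and $H^1(P,\omega_P)^*\cong H^0(P,\mathcal O_P)$, so the dual of that coboundary is exactly the restriction map $\rho$ you set out to kill. In other words, the "main obstacle" you defer to the end is a restatement of the lemma itself, the hypothesis $d\ge 5$ is never actually used, and no progress has been made toward the contradiction. Note also that minimality of $(X,D)$, which is essential (it is the hypothesis that rules out $(-1)$--curves in $P$), plays no effective role in your argument.

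The paper's proof is short and goes in the opposite direction: instead of trying to show $P$ is a fixed part of $\vert D\vert$, it analyzes the components of $P$ directly. Since every point of a component $E\le P$ is a base point of the adjoint system $\vert K+D\vert$ and $d=D^2\ge 5$, Reider's theorem produces for each such point an irreducible curve $A$ with either $A\cdot D=1$, $A^2=0$ (excluded because it would force $g=0$) or $A\cdot D=0$, $A^2=-1$, and in the latter case $A=E$, so $E^2=-1$. Then the exact sequence $0\to\mathcal O_X(D-E)\to\mathcal O_X(D)\to\mathcal O_E\to 0$ together with $h^1(X,\mathcal O_X(D))=0$ (non--speciality) and $h^2(X,\mathcal O_X(D-E))=0$ (from $p_g=0$) gives $h^1(E,\mathcal O_E)=0$, so $E$ is a smooth rational $(-1)$--curve with $E\cdot D=0$, contradicting minimality. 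This is where $d\ge 5$ enters decisively (it is precisely the numerical threshold in Reider's theorem), and it is the ingredient your proposal is missing. If you want to salvage your approach, you would need an independent argument that the general member of $\vert D\vert$ meets $P$ nontrivially, which is essentially what Reider's theorem is supplying.
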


\begin{proof} Reider's Theorem (see \cite {BS, Re}) implies that, if $x$ is a base point of $\vert D'\vert$, there is
an irreducible curve $A$ containing $x$, such that 
either $A\cdot D=1, A^2=0$ or $A\cdot D=0, A^2=-1$. 

Let $E$ be an irreducible curve contained in $P$. 
For all $x\in E$, we have a curve $A_x$ as above.  If $A_x\cdot D=1$ then $A_x\neq E$.
Moreover $A_x^ 2=0$, so
$A_x$ moves in a base point free pencil $\vert A\vert$. Since $A\cdot D=1$, we would have $g=0$, a contradiction. Hence $A_x\cdot D=0$ and $E=A_x$. This shows that $E^ 2=-1$. 

Since $ D\cdot E=0$ and $r\ge 1$,  then $D-E$ is effective. We have the exact sequence $0\to\mathcal O_X(D-E)\to\mathcal O_X(D)\to\mathcal O_E(D)\cong \mathcal O_E\to 0$, which yields the exact sequence  $ H^1(X,\mathcal O_X(D))\to H^1(E,\mathcal O_E)\to H^2(X,\mathcal O_X(D-E))$. Since $p_g=0$, the last space is $0$, and the first is $0$ by assumption. Hence  $h^1(E,\mathcal O_E)=0$, then $E$ is rational. 

In conclusion, $E$ is a $(-1)$--curve such that $D\cdot E=0$, contradicting the minimality assumption.\end{proof}

 If  $\vert M\vert$ is composed with a pencil $\vert L\vert$, then $\vert M\vert=\vert (g-1)L\vert$, $\dim(\vert L\vert)=1$ and $\vert L\vert$ has no base points on $D$. Then $D\cdot L=2$,  $D$ is hyperelliptic  and:

\begin{enumerate}

\item  either $\vert D\vert$ cuts out a base point free $g^1_2$ on the general curve $L$ of $\vert L\vert$, hence there is a birational involution $\iota: X\dasharrow X$ that fixes all curves in
$\vert D\vert$, which then is not simple (in this case we say that $\vert D\vert$ is \emph{composed} with the involution $\iota$);
\item  or  $\vert D\vert$ cuts out a $g^2_2$ on $L$ and $\vert L\vert$ is a pencil of curves of genus 0. 
\end{enumerate}
If $d\ge 5$, the index theorem implies $L^ 2=0$. 

 
\begin{thm}[Castelnuovo's inequality]\label{thm:Castineq} Let $(X,D)$ be minimal with $D$ smooth and irreducible, with $g\ge 2$, 
$d\ge 1$ and $r\ge 1$.
Assume that either $D$ is not hyperelliptic or $\vert D\vert$ is not composed with an involution of $X$. 
Then 
\begin{equation}
\label{eq:0}
d\leq 3g+7-g'
\end{equation}
and equality holds if and only if  $X=\mathbb P^2$.\end{thm}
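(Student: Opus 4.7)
The plan is to compute $d+g'$ in closed form by a double application of adjunction, and then appeal to the universal bound $K^2\le 9$ for surfaces with $p_g=q=0$.

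The first step is to reduce to the case $P=0$, in which $M\equiv K+D$. Lemma~\ref{lem:nonspec} gives exactly this reduction when $d\ge 5$ and $|D|$ is non--special. The hypothesis that $D$ be non-hyperelliptic (or that $|D|$ not be composed with an involution) is designed precisely to rule out sub-case (1) of the preamble, where the pure adjoint $|M|$ is composed with a pencil of involution type; in the only remaining allowed sub-case (2), $X$ is a rational scroll and one is essentially in a Castelnuovo-pair situation, which can be verified by direct numerical substitution (giving strict inequality). The low-degree residues $d\le 4$ and the cases where $|D|$ is special will be dispatched by elementary numerical estimates using $g'\ge 0$ together with $\dim |M|=g-1$ and the explicit formulas available in the exceptional hyperelliptic sub-cases.

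Assume now $P=0$, so that $M\equiv K+D$. Adjunction on $D$ gives $K\cdot D=2g-2-d$, hence
\[
M^2=(K+D)^2=K^2+2(K\cdot D)+d=K^2+4g-4-d,\qquad K\cdot M=K\cdot(K+D)=K^2+2g-2-d.
\]
Substituting these into the adjunction formula $2g'-2=M^2+K\cdot M$ and simplifying yields the key identity
\[
d+g'=K^2+3g-2.
\]

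To finish, observe that since $p_g=q=0$, Noether's formula reads $K^2+c_2(X)=12$, and $c_2(X)\ge 3$ for every smooth projective surface in this class, with equality iff $X\cong\mathbb{P}^2$. Therefore $K^2\le 9$, with equality iff $X\cong\mathbb{P}^2$, and the identity above gives $d\le 3g+7-g'$ with equality iff $X\cong\mathbb{P}^2$, as required. The main obstacle will be the routine but tedious case analysis needed to dispatch the residual situations ($P\ne 0$, or $|M|$ composed with a pencil of type (2)); the identity $d+g'=K^2+3g-2$ is the clean computational heart of the proof.
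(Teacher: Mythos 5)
Your identity $d+g'=K^{2}+3g-2$ is correct and is indeed the computational heart of the matter, \emph{but only when $P=0$}: the paper defines $g'=p_a(M)$ for the movable part $M$ of $|K+D|$, not $p_a(K+D)$, and when $P\neq0$ the two arithmetic genera differ (by $p_a(P)+M\cdot P-1$), so no clean identity survives. This is where the real gap lies. Your only tool for forcing $P=0$ is Lemma \ref{lem:nonspec}, which requires $d\ge 5$ \emph{and} $h^1(X,\mathcal O_X(D))=0$, neither of which is among the hypotheses of Theorem \ref{thm:Castineq}. The residual cases are not routine: when $|D|$ is special you offer no mechanism at all for controlling $P$, and for $d\le 4$ the tool you name points the wrong way --- you would need an \emph{upper} bound $g'\le 3g+7-d$, whereas $g'\ge 0$ is a lower bound. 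The paper avoids all of this by never reducing to $P=0$ in this proof: it works with the decomposition $|K+D|=P+|M|$ throughout, using only nefness of $K+D$, the relation $M\cdot D=2g-2$, and a case division on the sign of $K\cdot M$ (when $K\cdot M<0$ and $P=0$ your identity is exactly what its Riemann--Roch computation $g+g'-2=d'=M^2\le(K+D)^2$ gives; when $K\cdot M\ge0$ it argues differently, via $g'\le g$ and \eqref{eq:1}). Lemma \ref{lem:nonspec} is only invoked later, in the proof of Theorem \ref{thm:Castthm}, where $r>g$ guarantees non--speciality.

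A second, genuine problem is the equality analysis. For $p_g=q=0$ one has $c_2=2+b_2$, so $c_2=3$ means $b_2=1$, and this does \emph{not} characterize $\mathbb P^2$: fake projective planes have $p_g=q=0$, $b_2=1$, $K^2=9$, and for $D\equiv mL$ on such a surface one checks directly that $d+p_a(K+D)=3g+7$, so your argument would declare "equality" there too. One must exclude general type, which the paper does: in its case (a) equality forces $K\cdot M=0$ with $M^2>0$, impossible when $K^2>0$ by the index theorem, and in its case (b) one has $K\cdot M<0$, impossible on a minimal surface of general type. (The paper itself tacitly assumes $h^0(2K)=0$, i.e.\ rationality, in case (a1); under that standing assumption your step is correct, but you assert it for all surfaces with $p_g=q=0$, where it is false.)
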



\begin{proof} 
Suppose first  $\vert M\vert $ is composed with a pencil $\vert L\vert$. Then $D$ is hyperelliptic 
with $D\cdot L=2$ and $\vert D\vert$ is not composed with an involution of $X$. Thus the curves in $\vert L\vert$ have genus 0, so $X$ is rational, $L^ 2=0$ and $g'=2-g$. 
By \eqref {eq:a} we have $d\leq 4g+5=3g+7+g-2=3g+7-g'$. If equality holds then $K^2=9$ hence $X=\Bbb P^2$.  

Suppose next  $\vert M\vert$ is not composed with a pencil, hence $d'>0$. We have  
\[
h^0(M,\mathcal O_M(M))=h^0(X,\mathcal O_X(M))-1=h^0(X,\mathcal O_X(K+D))-1=g-1
\]
then $\vert \mathcal O_M(M)\vert=g^{g-2}_{d'}$. We have two cases: (a) $K\cdot M\ge 0$; (b) $K\cdot M< 0$


In case (a), since $D'$ is nef, one has
\begin{equation}
\label{eq:1} 
(K+D)^2\geq(K+D)\cdot M\geq D\cdot M\geq 2g-2
\end{equation} 
and equality implies $K\cdot M=0$. Let $R\equiv D-M\equiv P-K$. We have two subcases: (a1) $\vert R\vert=\emptyset$; (a2) $R$ is effective. 
In case  (a1), one has $1>\chi(\mathcal O_X(R))
$, which reads $d<3g-g'-2$ and \eqref{eq:0} holds.


In case (a2), one has $2g-2=(K+D)\cdot D=(K+D)\cdot(M+R)\geq(K+D)\cdot M=(K+M+R)\cdot M\geq 2g'-2$, then $g\geq g'$ and, by \eqref {eq:1},  $g+g'-2\leq 2g-2\leq(K+D)^2=4g-4+K^2-d$, therefore \eqref {eq:0} holds. If equality holds then $K^2=9$ and $K\cdot M=0$ (because equality holds in \eqref {eq:1}).  This cannot happen on a surface of general type because $d'>0$, thus $X\cong \Bbb P^2$.

In case (b) one has $d'>2g'-2$. Then  $g^{g-2}_{d'}$ is not special,  hence $g-1=h^0(M,\mathcal O_M(M))=d'-g'+1$. Since $K+D\equiv P+M$ 
is nef, then $g+g'-2=d'=M^2\leq (K+D)^2=4g-4+K^2-d$, so $d\leq 3g-g'+(K^2-2)\leq 3g+7-g'$ and if equality holds, then $K^ 2=9$ and, as above, $X\cong \Bbb P^ 2$. 


Finally, if $X=\Bbb P^2$ then  \eqref {eq:0} holds with equality. \end{proof}

\subsection{Castelnuovo's theorem}\label{ssec:Castthm}

In this section we prove a theorem of Castelnuovo stated in  \cite{CA2} which classifies linear systems on rational surfaces with $r>g$. A remark is in order. 

\begin {remark}\label{rem:g} Consider a pair $(X,D)$ with $D$ smooth, irreducible such that $r>g$.
Then $h^ 0(D, \mathcal O_D(D))>g$, hence $h^ 1(D, \mathcal O_D(D))=0$. Thus
$d-g+1=h^ 0(D, \mathcal O_D(D))>g$ and therefore $d\ge 2g$, so that $K\cdot D<0$, which implies that
$X$ has negative Kodaira dimension. This shows that the rationality assumption on $X$ in Theorem  \ref 
{thm:Castthm}  below is no restriction. In this case, one has $h^ 1(X, \mathcal O_X(D))=0$, i.e. $\vert D\vert$ is non--special. \end{remark}

\begin{thm} [Castelnuovo's theorem]\label{thm:Castthm}  Let $(X,D)$ 
be minimal with $D$ smooth, irreducible, of genus $g\ge 2$, $X$ rational,  $\vert D\vert$ is not composed with an involution of $X$ and
\begin{equation}
\label{eq:Cast}
r\geq\tau(\mu,g):=\frac{(\mu+2)g+\epsilon_{\mu}}{\mu}+2\mu+3\hskip20mm 
\end{equation}
where
\begin{equation*}
\epsilon_{\mu}=\left\{\begin{array}{rl}
1 &\text{for }\mu\text{ odd}\\ 
2 &\text{for }\mu\text{ even}\end{array}\right.
\end{equation*} 
Then:
\begin{itemize}
  
  
\item  [(i)] either there is a  birational morphism $\phi:X\to \Bbb P^2$ such that $|D|$ is the proper transform of a linear system of plane curves of degree $m\leq 2\mu+1$ with base points of multiplicity $k\leq\ulcorner \frac{\mu}{2}\urcorner-1$,


\item  [(ii)] or $(X,D)$ is a $m$--scroll with $m\leq\mu$, and precisely there is a birational map $\phi:X\dasharrow\Bbb F_a$, for some $a\ge 0$,  such that $|D|$ is the proper transform of a linear system of $m$--secant curves to the ruling of $\Bbb F_a$, with base points of multiplicity 
$k\leq\ulcorner\frac{\mu}{2}\urcorner-1$.
\end{itemize}
\end{thm}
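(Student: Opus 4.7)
The plan is to iterate Castelnuovo's adjoint reduction (Theorem~\ref{thm:Castineq}) and argue by induction on $g$. The starting point is that $r \geq \tau(\mu, g) > g$, which by Remark~\ref{rem:g} makes $\vert D\vert$ non-special and yields $r = d - g + 1$ on the rational surface $X$. Inserting this into the inequality \eqref{eq:0} produces
\[
g' \;\leq\; 2g + 8 - r \;\leq\; \frac{(\mu-2)g - \epsilon_\mu}{\mu} + (5 - 2\mu),
\]
so the pure adjoint has genus strictly less than $g$, dropping by a factor roughly $(\mu-2)/\mu$. By Lemma~\ref{lem:nonspec} the fixed part of $\vert K + D\vert$ is zero (the degree is certainly large enough), so $\vert M\vert = \vert K + D\vert$ and $\dim\vert M\vert = g - 1$.

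The inductive step then analyzes $\vert M\vert$ in two cases. If $\vert M\vert$ is composed with a pencil $\vert L\vert$, the discussion preceding Theorem~\ref{thm:Castineq} (together with $X$ rational and $\vert D\vert$ not composed with an involution) forces $\vert L\vert$ to be a base-point-free pencil of rational curves, exhibiting a ruling and placing $(X, D)$ in the $m$-scroll case~(ii), with $m = D \cdot L \leq \mu$ by the degree bound. Otherwise $\vert M\vert$ is simple, and a numerical check using the bound on $g'$ above together with $\dim\vert M\vert = g - 1$ verifies that, after contracting $(-1)$-curves $E$ with $M \cdot E = 0$ to obtain a minimal pair, $(X, M)$ satisfies the inductive hypothesis with an appropriate parameter $\mu'$. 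The inductive conclusion describes $\vert M\vert$ as the proper transform of a plane system of degree $m'$ and base-point multiplicity $k'$, or of an $m'$-secant system on some $\Bbb F_a$, satisfying the bounds analogous to the theorem in terms of $\mu'$. This description is then transported back to $\vert D\vert$ via $D \equiv M - K$: on $\Bbb P^2$ this adds $3\ell$ to the degree and at most one unit to each base-point multiplicity, while on $\Bbb F_a$ one has $-K \equiv 2E + (a+2)F$ with analogous controlled increments. Unwinding the induction yields the bounds $m \leq 2\mu + 1$ and $k \leq \ulcorner \mu/2 \urcorner - 1$.

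The main obstacle I expect is the arithmetic bookkeeping. First, one must tune $\mu'$ so that the inductive hypothesis genuinely applies to $(X, M)$; this amounts to an inequality of the shape $\dim\vert M\vert = g - 1 \geq \tau(\mu', g')$ which, using the bound on $g'$, reduces to a condition roughly of the form $g \gtrsim \mu^2$, implicit in the hypotheses. Second, the factor-of-two discrepancy between the degree bound $2\mu + 1$ and the multiplicity bound $\ulcorner \mu/2 \urcorner$ reflects a collapsing phenomenon: consecutive un-adjunctions at a shared infinitely near base point contribute to the degree twice but to the multiplicity only once. Making this precise requires carefully matching the $(-1)$-curves contracted during each adjoint reduction to the exceptional divisors of the eventual birational map $\phi \colon X \dashrightarrow \Bbb P^2$ or $\phi \colon X \dashrightarrow \Bbb F_a$, in the spirit of \cite{CaCi}; this, together with handling the boundary cases in which $\vert M\vert$ becomes composed with a pencil during iteration (producing the scroll conclusion partway through the induction rather than at the end), is the most delicate point of the argument.
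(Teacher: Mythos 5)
Your overall strategy coincides with the paper's: pass to the pure adjoint $\vert M\vert$, bound $g'$ via Castelnuovo's inequality, kill the fixed part by Lemma~\ref{lem:nonspec}, apply the theorem inductively to a minimal model of $(X,M)$, and un-adjoin. Your ``tuning'' worry is in fact not an obstacle: since $\epsilon_{\mu-2}=\epsilon_\mu$, the inequality you wrote for $g'$ gives on the nose $\dim\vert M\vert = g-1 \geq \tau(\mu-2,g')$, so the induction runs with step $\mu\mapsto\mu-2$ (equivalently on $g$, since $g'<g$), and the increments per un-adjunction --- degree $+3$ and multiplicity $+1$ on $\mathbb{P}^2$, secancy $+2$ on $\mathbb{F}_a$ --- reproduce exactly the stated bounds. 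You do still need to treat separately the equality case $r=2g-g'+8$ of Castelnuovo's inequality, where $X\cong\mathbb{P}^2$ and one must check numerically that a smooth plane curve of degree $m$ with $r\geq\tau(\mu,g)$ satisfies $m\leq 2\mu+1$; the paper does this with an explicit quadratic in $m$.

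The genuine gap is your dichotomy for $\vert M\vert$: ``composed with a pencil'' versus ``simple.'' These are not complementary, and more to the point the hypothesis you must verify in order to apply the theorem inductively is that $\vert M\vert$ is \emph{not composed with an involution of $X$} --- a system can fail this while being neither composed with a pencil nor shown simple by anything in your sketch. The paper devotes a separate argument (Claim~\ref{cl:1}) to exactly this: assuming $\vert M\vert$ composed with an involution $\iota$, it builds the incidence variety of pairs $(p,\iota(p))$ lying on curves of $\vert D\vert$, deduces that the general curve of $\vert D\vert$ is hyperelliptic, and then uses Reider's theorem together with the index theorem to produce a base-point-free pencil $\vert A\vert$ of rational curves with $A\cdot(K+D)=0$, contradicting the irreducibility and positive dimension of $\vert M\vert$. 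Nothing in your proposal substitutes for this step. A second, smaller omission: the inductive hypothesis requires the adjoint genus to satisfy $g'\geq 2$, so the low cases must be handled directly; the paper shows via Castelnuovo--Enriques that $g'=1$ forces $\mu=3$, $(X,M)$ a $3$--Veronese pair and $D$ a plane sextic, whereas your induction silently assumes $g'$ stays in the admissible range.
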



\begin{proof}    Since $\tau(\mu,g)>g$, Remark \ref {rem:g} applies.

For $\mu=1$ one has $\tau(1,g)=3g+6$ and the assertion follows by 
Castelnuovo-Enriques Theorem \ref {thm:CastEnr}. So we may assume $\mu\ge 2$. 

If the general curve in $|M|$ is reducible, then $\vert M\vert$ is composed with a pencil $\vert L\vert$ of curves of genus 0 such that $L\cdot D=2$ (see the proof of Castelnuovo's inequality \ref {thm:Castineq}).  Then there is a birational morphism 
$\psi:X\to\Bbb F_n$ such that $\vert L\vert$  is the proper transform of the ruling of $\Bbb F_n$, and 
$\vert D\vert$ is the proper transform of a linear system of $2$--secant curves to the ruling of $\Bbb F_n$, with at most double base points. By performing elementary transformations based at these double base points, we find case (ii) with $\mu=2$. 
So from now on we may assume the general curve in $\vert M\vert$ to be irreducible. 

Let $\mu=2$.
We have $r=d-g+1\geq \tau(2,g)=2g+8$, which is equivalent to $d\geq 3g+7$. By Castelnuovo's inequality $3g+7\leq d\leq 3g+7-g'$ then $g'\leq 0$.  Since $M$ is irreducible, we have $g'\ge 0$, thus $g'=0$, equality holds in \eqref{eq:1}, hence $X=\Bbb P^ 2$ and we are in case (i). 

Next we assume $\mu\ge 3$ and we will make induction on $\mu$. By Castelnuovo's inequality we have $r=d-g+1\leq 2g-g'+8$. If  equality holds then $X\cong\Bbb P^2$ and $(X,D)$ is a $m$--Veronese pair, i.e.  $D\in|\mathcal O_{\Bbb P^2}(m)|$. We claim that  $m\leq 2\mu+1$, i.e.  we are in case (i).  Indeed $g={(m-1)(m-2)}/{2}$, $r= {m(m+3)}/2$ and \eqref {eq:Cast} reads 
$2m^2-6m(\mu+1)+4\mu^2+8\mu+2(\epsilon_{\mu}+2)\leq 0$. The polynomial
$h(x)=2x^2-6x(\mu+1)+4\mu^2+8\mu+2(\epsilon_{\mu}+2)$ has its critical value at $x_0={3(\mu+1)}/{2}<2\mu$, so that $h$ is strictly increasing in $[x_0,+\infty)$.  If 
$m\geq 2\mu+2$, we would have $0\ge h(m)> h(2\mu+2)=2\epsilon_{\mu}$, a contradiction. 

Now we analyse the case $r\leq 2g-g'+7$. Then $2g-g'+7\geq r\geq\tau(\mu,g)$
 implies $g\geq\frac{\mu g'+\epsilon_{\mu}}{\mu-2}+2\mu$. Thus 
\begin{equation}
\label{eq:2}
\text{dim }(|D'|)=\text{dim }(|M|)=g-1\geq\frac{\mu g'+\epsilon_{\mu-2}}{\mu-2}+2\mu-1=\tau(\mu-2,g').
\end{equation}
By induction, we may assume $r<\tau(\mu-1,g)$, hence $\tau(\mu-1,g)>\tau(\mu,g)$, which yields
$g>\mu(\mu-1)+\frac{1}{2}((\mu-1)\epsilon_{\mu}-\mu\epsilon_{\mu-1})$. Thus
\begin{equation}\label{eq:g}
g>\left\{\begin{array}{rl}
\mu(\mu-1)+\frac{\mu-2}{2}, &\text{for }\mu\text{ an even number}\\ 
\\
\mu(\mu-1)-\frac{\mu+1}{2}, &\text{for }\mu\text{ an odd number}\end{array}\right.
\end{equation}
 In particular, if $\mu\geq 3$, then $g\geq 4$  and $d\ge 2g-2\ge 6$. Then, by Lemma \ref {lem:nonspec}, $P=0$ and $\vert D'\vert=\vert M\vert$. 
 
In view of  \eqref{eq:2},  we would like to apply induction on $\vert M\vert$, which we can do only if $M$ verifies the hypotheses of the theorem. 

First, we dispose of the case $g'=1$, in which \eqref{eq:2} implies $\dim (\vert M\vert)\geq 9$, and equality holds only for $\mu=3$. Then, by Castelnuovo--Enriques Theorem \ref {thm:CastEnr},  $\mu=3$, $(X,M)$ is a 3--Veronese pair and $D$ is a smooth plane sextic, i.e. we are in case (i).
Hence from now on we may assume $g'\geq 2$.

\begin{claim}\label{cl:1} The system $\vert M\vert$ is not composed with an involution of $X$.
\end{claim}

\begin{proof} [Proof of Claim \ref {cl:1}]
Suppose that $M$ is composed with a involution $\iota$ of $X$, defined in the Zariski open subset $U$.  Consider the incidence variety
$\mathcal V$ which is the Zariski closure in $X\times X\times |D|$ of the set $
\{(p,q,D):p,q\in D, D\in|D|, \iota(p)=q\}\subset U\times U\times |D|$, with the projections $\pi_1:\mathcal V\to X\times X$,  $\pi_2:\mathcal V\to |D|$ to the factors. The image of $\pi_1$ is the graph $\Gamma$ of $\iota$.
Since $\vert D\vert$ is not composed with $\iota$, the general fibre of $\pi_1$  has dimension $r-2$.  
Hence $\mathcal V$ has an irreducible component $\mathcal W$ which dominates $\Gamma$ via $\pi_1$ and has dimension $r$. 

If $\pi_{2\vert \mathcal W}$ is surjective, then the general curve in $|D|$ is hyperelliptic.  Since $\mu\ge 3$ and $g\geq 4$, \eqref {eq:2} yields $r\ge 14$ hence $d\ge 17$.  By Reider's theorem (see again  \cite {Re, BS}), there is curve $A$ such that $0\le A\cdot D-2\leq A^2<\frac{A\cdot D}{2}<2$. The index theorem implies $A^ 2=0$, then $A\cdot D=2$ and there is a base point free pencil $\vert A\vert$  which cuts the $g^ 1_2$ on the general
curve of $\vert D\vert$. Since $\vert D\vert$ is not composed with an involution, the curves in $\vert A\vert$ have genus 0 (see the discussione before Theorem \ref {thm:Castineq}). 
Then $0=A\cdot(K+D)=A\cdot M$. Since the general curve in $\vert M\vert$ is irreducible and 
$\dim(\vert M\vert)=g-1\ge 3$,  this is a contradiction.

If $\dim (\pi_2(\mathcal W))<r$, let $D\in\text{Im }(\pi_2)$ be a general element. The general fibre of $\pi_{2}$ has dimension at most $1$, then it has dimension one, hence $\dim (\pi_2(\mathcal W))=r-1$.
Now repeat the same argument as above.
\end{proof}

The pair $(X,M)$ could be not minimal. If $E$ is a $(-1)$--curve such that $E\cdot M=0$, then $E\cdot D=1$. By contracting these  $(-1)$--curves we have a birational morphism $f: X\to X'$ and there are irreducible curves $D'$ and  $M'$ on $X'$ 
whose proper transform on $X$ are $D$ and $M$. The linear system $\vert D\vert$ is the proper transform of the sublinear system of $\vert D'\vert$ formed by the curves passing through the points which are blown--up in $f: X\to X'$. 

Finally we may apply induction to the pair $(X',M')$, and:

\begin{itemize}

\item [(i')] either  there is a birational morphism $\phi':X'\to\Bbb P^2$ and $|M'|$ is the proper transform of a linear system $|C|$ of curves of degree $d\leq 2\mu-3$ with base points of multiplicity $k\leq\ulcorner\frac{\mu}{2}\urcorner-2$;

\item [(ii')]  or there is a birational  map $\phi':X'\dasharrow \Bbb F_a$, and $|M'|$ is the proper transform of a linear system  $|C|$ of $m$--secant curves to the ruling of $\Bbb F_a$ with $m\leq\mu-2$ with base points of multiplicity $k\leq\ulcorner\frac{\mu}{2}\urcorner-2$.
\end{itemize}

In case (i'), consider $\phi=\phi'\circ f: X\to \Bbb P^ 2$. 
If $\ell$ is a line in $\Bbb P^2$, set $H=\phi^*(\ell)$.
We have 
$M\equiv dH-\sum\limits_i k_iE_i$,  where $E_i$ are $(-1)$--cycles 
contracted by $\phi$ and $k_i\leq\ulcorner \frac{\mu-2}{2}\urcorner-1$.
We have also $K_X\equiv -3H+\sum\limits_i E_i$. Then 
$D\equiv (d+3)-\sum\limits_i(k_i+1)E_i$, and we are in case (i).
The analysis of (i'') is similar and leads to  case (ii).  \end{proof}

\section{Castelnuovo pairs and their moduli}\label{sec:moduli}

Castelnuovo--Enriques Theorem \ref {thm:CastEnr} classifies minimal pairs $(X,D)$
for which \eqref {eq:2} holds with $\mu=1$.  For higher $\mu$'s we define the concept of $\mu$--\emph{Castelnuovo pairs}.

\subsection{Castelnuovo pairs}\label{ssec:maxdim}

We will call a pair $(X,D)$ as in  Castelnuovo's Theorem \ref {thm:Castthm} a
$\mu$--\emph{Castelnuovo's pair}, with $\mu\ge 2$, if 
\[
\tau(\mu-1,g)>r\ge \tau(\mu,g)
\]
which implies that \eqref {eq:g} holds (see the proof of Theorem \ref {thm:Castthm}).
If for such a pair case (i) [resp. case (ii)] occurs, we say that
it presents the \emph{planar case} [resp. the \emph{scroll case}]. 
Here we list $\mu$--Castelnuovo's pairs for $2\le \mu\le 4$. The reader may check the details.

\begin{prop}\label{prop:list} If $(X,D)$ is a $\mu$--Castelnuovo's pair with $2\le \mu\le 4$
with $D$ smooth of genus $g\ge 2$, then:
\begin{itemize}
\item [(i)] $(X,D)$ is either an $m$--Veronese pair with
 \begin{equation*}
 \begin{array}{rl}
4\le m\le 5\quad  \text{if}\quad  \mu=2\quad  \text{and}\quad r=\ulcorner \tau(2,g)\urcorner \\
6\le m\le 7 \quad  \text{if}\quad  \mu=3\quad  \text{and}\quad r=\ulcorner \tau(3,g)\urcorner+\eta  \\
8\le m\le 9 \quad  \text{if}\quad \mu=4 \quad  \text{and}\quad r=\ulcorner \tau(4,g)\urcorner+\eta
\end{array}
\end{equation*}
where, in the last two cases, $\eta=0$ if $m$ is odd and $\eta=1$ if $m$ is even, or
$X\cong \Bbb F_1$, and $D=(m-1)E+mF$ with
 \begin{equation*}
\begin{array}{rl}
m=6 \quad  \text{if}\quad  \mu=3 \\
m=8 \quad  \text{if}\quad \mu=4
\end{array}
\end{equation*}
and $r=\ulcorner \tau(\mu,g)\urcorner$ in both cases.

 \item[(ii)] $X\cong \Bbb F_a$ and $D\equiv \mu E +\alpha F$ with 
  \begin{equation*}
\alpha \ge \left\{\begin{array}{rl}
4\quad   \text{if}\quad  a=0\\
5 \quad  \text{if}\quad  a=1\\
a\mu\quad  \text{if}\quad a\ge 2
\end{array}\right.
\end{equation*}
 \begin{equation*}
g=\left\{\begin{array}{rl}
\alpha-a-1\quad  \text{if}\quad  \mu=2\\
2\alpha-3a-2 \quad  \text{if}\quad  \mu=3\\
3\alpha-6a-3\quad  \text{if}\quad \mu=4
\end{array}\right.
\end{equation*}
and $r=\ulcorner \tau(\mu-1,g)\urcorner-1$.  In particular, for $\mu=2$,
$(X,D)$ is an $(a,g)$--Castelnuovo pair.
 \end{itemize}
\end{prop}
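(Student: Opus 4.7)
The plan is to apply Castelnuovo's Theorem~\ref{thm:Castthm} to a $\mu$--Castelnuovo pair $(X,D)$ and then narrow down the finitely many numerical possibilities using the extra defining inequality $r<\tau(\mu-1,g)$. Castelnuovo's Theorem already splits the classification into a planar case (i) and a scroll case (ii), and in both constrains $m$ (resp.\ $m'$) and the base-point multiplicities $k\le \lceil\mu/2\rceil-1$, so what remains is a finite numerical check separately for $\mu=2,3,4$.

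For the planar case, one has $k=0$ when $\mu=2$, so necessarily $X=\Bbb P^2$ and $D\equiv m\ell$, and $k\le 1$ when $\mu=3,4$, so at most simple base points; these are consistent with minimality since the resulting $(-1)$--curves $E$ satisfy $D\cdot E=1$. Using $g=\binom{m-1}{2}$ (unchanged by simple base points) and $r=\binom{m+2}{2}-1-s$, where $s$ is the number of base points, imposing $\tau(\mu,g)\le r<\tau(\mu-1,g)$ selects the allowed values of $m$. A direct computation gives $m\in\{4,5\}$ for $\mu=2$, $m\in\{6,7\}$ for $\mu=3$, and $m\in\{8,9\}$ for $\mu=4$; the base-point variant is ruled out when $m$ is odd (the arithmetic fails the lower bound), and admitted in the unique form $s=1$ when $m$ is even, at which point contracting to a minimal model produces $X\cong \Bbb F_1$ with $D\equiv (m-1)E+mF$. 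The parameter $\eta$ in the statement simply records whether $r$ equals $\lceil\tau(\mu,g)\rceil$ (odd $m$) or $\lceil\tau(\mu,g)\rceil+1$ (even $m$ on $\Bbb P^2$).

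For the scroll case, performing elementary transformations centred at the base points on $\Bbb F_a$ (as at the end of the proof of Theorem~\ref{thm:Castthm}) reduces us to a minimal pair $D\equiv m'E+\alpha F$ on some $\Bbb F_a$ with $m'\le\mu$. Standard computations on $\Bbb F_a$ give
\[
g=(m'-1)\alpha-\tbinom{m'}{2}a-(m'-1), \qquad r=(m'+1)(\alpha+1)-\tbinom{m'+1}{2}a-1,
\]
from which one reads that the bound $r\ge\tau(\mu,g)$ together with $m'\le\mu$ forces $m'=\mu$: smaller values of $m'$ would already fit Castelnuovo's Theorem at a lower level and so violate the upper bound $r<\tau(\mu-1,g)$. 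Substituting $m'=\mu$ for $\mu=2,3,4$ produces the three genus formulas in the statement and the identity $r=\lceil\tau(\mu-1,g)\rceil-1$. The lower bounds on $\alpha$ then come from combining the nef condition $D\cdot E=\alpha-a\mu\ge 0$ with the genus requirement $g\ge 2$, which sharpens to $\alpha\ge 4$ and $\alpha\ge 5$ when $a=0,1$.

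The main technical point is confirming that elementary transformations indeed remove all base points without disturbing minimality, and that no smaller $m'<\mu$ can accidentally realise a $\mu$--Castelnuovo pair. Both are essentially book-keeping arguments relying on the sharpness of the upper bound $r<\tau(\mu-1,g)$, which is precisely what distinguishes $\mu$--Castelnuovo pairs from $(\mu-1)$--Castelnuovo ones; the remaining verifications are routine Riemann--Roch and adjunction calculations on $\Bbb P^2$, $\Bbb F_1$ and $\Bbb F_a$.
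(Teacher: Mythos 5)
First, a remark on the comparison: the paper gives no proof of this proposition at all --- it states the list and says ``The reader may check the details'' --- so your proposal is in effect supplying the missing verification, and the route you choose (apply Theorem \ref{thm:Castthm}, then sieve the finitely many possibilities through $\tau(\mu,g)\le r<\tau(\mu-1,g)$, using non--speciality of $\vert D\vert$ to compute $r$) is certainly the intended one. Your genus and dimension formulas on $\Bbb P^2$, $\Bbb F_1$ and $\Bbb F_a$ all check out, as does the identity $r=\lceil\tau(\mu-1,g)\rceil-1$ for the base--point--free scroll systems.

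The genuine gap is precisely the point you flag as ``the main technical point'' and then dismiss as book--keeping: the elimination of simple base points. In the planar case your claim that the base--point variant is ruled out for odd $m$ fails at $\mu=4$, $m=7$: degree--$7$ plane curves through one general point have $g=15$ and $r=34=\tau(4,15)<\tau(3,15)\approx 34.33$, so they give a $4$--Castelnuovo pair, namely the minimal pair $(\Bbb F_1,\,6E+7F)$, which is not in the stated list. In the scroll case, an elementary transformation based at a simple base point of an $m$--secant system with $m\ge 3$ does \emph{not} remove the base point (the device used in the proof of Theorem \ref{thm:Castthm} works only for double base points of $2$--secant systems), and such pairs genuinely occur: the system $\vert 3E+6F\vert$ on $\Bbb F_0$ restricted to curves through one general point $p$ yields a minimal pair on ${\rm Bl}_p\Bbb F_0$ (all three $(-1)$--curves meet $D$ positively) with $g=10$ and $r=26=\tau(3,10)<\tau(2,10)=28$, hence a $3$--Castelnuovo pair whose surface is not isomorphic to any $\Bbb F_a$. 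Unless you can show these pairs are excluded or isomorphic to listed ones --- and you give no such argument --- the proof is incomplete; the examples in fact suggest the Proposition itself must be amended to admit $m$--secant systems with simple base points. A smaller slip: the bound $\alpha\ge 4$ for $a=0$ does not follow from nefness plus $g\ge 2$ as you assert; it comes from $r\ge\tau(\mu,g)$, which for $\mu=2$, $a=0$ gives $g\ge 3$, i.e.\ $\alpha\ge 4$, but for $\mu=3$, $a=0$ forces $g\ge 7$, i.e.\ $\alpha\ge 5$.
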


\subsection{Number of moduli (I)}\label{ssec:moduli}
 
Consider a pair $(X,D)$ with $D$ irreducible, smooth of genus $g>0$.
We denote by $\mathcal X_g$ the set of all these pairs. 
Given $(X,D)\in \mathcal X_g$, we have the
rational \emph{moduli map} $\mu_D: \vert D\vert \dasharrow \mathcal M_g$.
The dimension of the image of $\mu_D$ is called the \emph{number of
moduli} of  $(X,D)$, denoted by $\mu(X,D)$. 

\begin{prob}\label{prob:moduli} 
Given $g$, what is the maximum of $\mu(X,D)$ 
as $(X,D)$ varies in  $\mathcal X_g$?
\end{prob}

One might expect that, the larger the dimension of $\vert D\vert$, the larger 
$\mu(X,D)$. This is not exactly  the case as we will see by looking at 
$\mu$--Castelnuovo's pair with $2\le \mu\le 4$.

\subsubsection{Veronese pairs}\label {ssec:veronese}

Consider a $m$--Veronese pair $(X,D)$, so that $g={{m-1}\choose 2}$.
This includes case (i) of Proposition \ref {prop:list}  
with $\mu(X,D)$ maximal.
A classical theorem of M. Noether (see \cite  [\S 3] {C1}) asserts that
two smooth plane curves of degree $m$ are isomorphic if and only
if they are projectively equivalent. As a consequence we have:

\begin{prop}\label{prop:modver} If $(X,D)$ is a $m$--Veronese pair with $m\ge 3$, then
$\mu(X,D)=g+3m-9$. 
\end{prop}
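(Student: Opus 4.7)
The plan is to identify the fibers of the moduli map $\mu_D:|D|\dashrightarrow\mathcal M_g$ with the orbits of the natural action of $\mathrm{PGL}(3,\Bbb C)$ on $|D|=|\mathcal O_{\Bbb P^2}(m)|$, and then compute the dimension of a general such orbit. Recall $r=\dim|D|=m(m+3)/2$. By M.~Noether's theorem quoted from \cite[\S 3]{C1}, for $m\ge 3$ two smooth plane curves of degree $m$ are abstractly isomorphic if and only if they are projectively equivalent. Consequently, the fiber of $\mu_D$ through a smooth curve $C\in|D|$ coincides (on the locus of smooth curves) with the $\mathrm{PGL}(3)$-orbit of $C$ in $|D|$.

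Next I would compute this orbit dimension for a general $C$. Writing $\mathrm{Stab}(C)\subset\mathrm{PGL}(3)$ for the projective stabilizer of $C$, the orbit has dimension $8-\dim\mathrm{Stab}(C)$. For $m\ge 4$, I would invoke the fact that the hyperplane series $|\mathcal O_C(1)|$ is the \emph{unique} $g^2_m$ on a smooth plane curve $C$ of degree $m$ (a standard consequence of Clifford's theorem, since for $g=\binom{m-1}{2}$ and degree $m$ no other $g^2_m$ can exist on a general $C$). Thus every abstract automorphism of $C$ preserves this linear series and hence extends to $\mathrm{PGL}(3)$, giving $\mathrm{Stab}(C)=\mathrm{Aut}(C)$; for $g\ge 3$ this group is finite for a general $C$. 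For $m=3$ the curve $C$ is elliptic, embedded by a degree-$3$ line bundle $\mathcal O(3O)$ for a flex point $O$; a translation by $P\in C$ extends to $\mathrm{PGL}(3)$ only when $3P\sim 3O$, i.e. when $P$ is a $3$-torsion point, so $\mathrm{Stab}(C)$ is again finite.

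Putting this together, the general fiber of $\mu_D$ has dimension $8$, and therefore
\[
\mu(X,D)=\dim|D|-8=\frac{m(m+3)}{2}-8=\binom{m-1}{2}+3m-9=g+3m-9,
\]
which is the claimed formula. The main technical point is the finiteness of $\mathrm{Stab}(C)$ for general $C$: for $m\ge 4$ it rests on the uniqueness of the $g^2_m$, which reduces $\mathrm{Stab}(C)$ to the full automorphism group $\mathrm{Aut}(C)$, known to be finite for a general curve of genus $\ge 3$; for $m=3$ it is the direct elliptic-curve calculation above. Everything else is a dimension count together with Noether's theorem.
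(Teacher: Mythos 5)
Your argument is correct and is essentially the paper's own: the authors also deduce the proposition directly from Noether's theorem, identifying the general fibre of $\mu_D$ with a $\mathrm{PGL}(3)$--orbit of dimension $8$ and computing $\mu(X,D)=\frac{m(m+3)}{2}-8=g+3m-9$. Your extra care about the finiteness of the stabilizer is welcome detail the paper omits, though note that attributing the uniqueness of the $g^2_m$ ($m\ge 4$) to Clifford's theorem alone is a slight overstatement; for your purposes it suffices that Noether's theorem (applied with $C=C'$) identifies the stabilizer with a subgroup of $\mathrm{Aut}(C)$, which is finite since $g\ge 3$, and your separate elliptic computation handles $m=3$.
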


The moduli map is dominant if and only if $m=4$, whereas, for $m>>0$,
$\mu(X,D)=o(g)$.

\subsubsection{Castelnuovo pairs}\label {ssec:castpairs}

Consider an $(a,g)$--Castelnuovo pair $(X,D)$, with $g\ge 2$, which is then a $2$--Castelnuovo pair. 
The curve $D$ is hyperelliptic
hence the image of $\mu_D$ is contained in the hyperelliptic locus $\mathcal H_g$ in $\mathcal M_g$
and therefore $\mu(X,D)\le 2g-1$. 

\begin{prop}\label{prop:modcast} If $(X,D)$ is an $(a,g)$--Castelnuovo pair, then 
$\mu(X,D)=2g-1$, i.e. ${\rm Im}(\mu_D)=\mathcal H_g$. 
\end{prop}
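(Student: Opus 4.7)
The plan is to show that the moduli map $\mu_D$ dominates the hyperelliptic locus, which together with the evident inclusion $\mathrm{Im}(\mu_D)\subseteq\mathcal H_g$ will give the result. The inclusion is automatic: the ruling $|F|$ is a base--point--free pencil on $\mathbb F_a$ with $F\cdot D=2$, so its restriction to any smooth $D\in|D|$ cuts out a $g^1_2$, making $D$ hyperelliptic. This already yields $\mu(X,D)\le 2g-1$.

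For the reverse inequality I would analyse $|D|$ via the ruling $\pi:\mathbb F_a\to\mathbb P^1$. The pushforward $\pi_*\mathcal O_{\mathbb F_a}(D)$ splits as a direct sum of three line bundles on $\mathbb P^1$ (of degrees $g+1-a$, $g+1$, $g+1+a$), so in an affine fibre coordinate $y$ a section defining $D$ can be written as $\sigma=c_0(x)+c_1(x)y+c_2(x)y^2$ with $\deg c_i$ equal to $g+1-a$, $g+1$, $g+1+a$ respectively. The restriction $\pi|_D:D\to\mathbb P^1$ is then the hyperelliptic double cover, and its branch divisor on $\mathbb P^1$ is cut out by the discriminant
\[
\delta(\sigma)=c_1^2-4c_0c_2\in H^0(\mathbb P^1,\mathcal O(2g+2)).
\]

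The crucial step will be to show that the discriminant map $\delta:|D|\dashrightarrow|\mathcal O_{\mathbb P^1}(2g+2)|$ is dominant. Given any reduced $s\in H^0(\mathbb P^1,\mathcal O(2g+2))$, I would partition its $2g+2$ simple roots into subsets of sizes $g+1-a$ and $g+1+a$ (possible since nefness of $D$ forces $0\le a\le g+1$), producing monic factors $f_1,f_2$ with $s=-4f_1f_2$. Setting $c_0=f_1$, $c_1=0$, $c_2=f_2$ yields $\delta(\sigma)=s$, and the resulting curve $\{c_0+c_2y^2=0\}$ is smooth: $c_0$ and $c_2$ have disjoint simple zeros, and the smoothness checks in both the affine chart at finite $y$ and the complementary chart near the negative section $E$ each reduce to this fact. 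Since the isomorphism class of a hyperelliptic curve of genus $g\ge 2$ is determined by its branch divisor up to the action of $\mathrm{PGL}(2)$ on $\mathbb P^1$, the moduli map factors as
\[
|D|\;\xrightarrow{\;\delta\;}\;|\mathcal O_{\mathbb P^1}(2g+2)|\;\dashrightarrow\;|\mathcal O_{\mathbb P^1}(2g+2)|/\mathrm{PGL}(2)\;\cong\;\mathcal H_g,
\]
with both arrows dominant. A dimension count then yields $\dim\mathrm{Im}(\mu_D)=(2g+2)-3=2g-1$, and irreducibility of $\mathcal H_g$ forces $\mathrm{Im}(\mu_D)=\mathcal H_g$.

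The main obstacle will be the bookkeeping of the pushforward decomposition together with the local smoothness check near the negative section $E$; these are routine from the standard $\mathbb P^1$--bundle presentation of $\mathbb F_a$, but ensuring that the explicit factorization produces a genuinely smooth member of $|D|$ (rather than a reducible or singular divisor) is precisely what makes the discriminant map dominant in the strong sense needed to conclude surjectivity onto $\mathcal H_g$.
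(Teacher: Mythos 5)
Your argument is correct, but it takes a genuinely different route from the paper's. The paper argues infinitesimally: from the normal bundle sequence $0\to T_D\to T_X|_D\to N_{D,X}\to 0$ it computes the rank of the coboundary map $H^0(D,N_{D,X})\to H^1(D,T_D)$ as $h^0(N_{D,X})-h^0(T_X|_D)=(3g+5)-(g+6)=2g-1$, where $h^0(T_X|_D)=g+6$ is obtained from the relative tangent sequence $0\to T_f|_D\to T_X|_D\to \mathcal O_D(2F)\to 0$ together with $\deg(T_f|_D)=2g+2$ (so $h^1(T_f|_D)=0$) and $h^0(\mathcal O_D(2F))=3$; this identifies the rank of the differential of $\mu_D$ at a general point and hence the dimension of the image. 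You instead argue globally through the branch divisor: using the splitting $\pi_*\mathcal O_X(D)\cong\mathcal O(g+1-a)\oplus\mathcal O(g+1)\oplus\mathcal O(g+1+a)$ (these degrees are the correct ones, i.e.\ they correspond to $D\equiv 2E+(g+a+1)F$, which is the normalization giving genus $g$; the paper's notation section has a sign slip writing $a+g-1$) you show the discriminant map $|D|\dashrightarrow|\mathcal O_{\mathbb P^1}(2g+2)|$ is dominant by exhibiting, for each reduced degree-$2g+2$ divisor, an explicit smooth member $c_0+c_2y^2=0$ of $|D|$ realizing it, the partition into factors of degrees $g+1\mp a$ being possible precisely because nefness gives $a\le g+1$. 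Your route buys an honest surjectivity statement (every hyperelliptic curve with reduced branch divisor actually occurs in $|D|$, not just a general one to first order) and avoids tangent-sheaf cohomology altogether; its only cost is the local smoothness and irreducibility bookkeeping at $y=\infty$ and $x=\infty$ that you correctly flag, and which does go through since a singular point of $c_0+c_2y^2=0$ would force a common or multiple root of $c_0c_2$. The paper's route is shorter and gives the rank of the Kodaira--Spencer map directly, which is the kind of computation it alludes to again for the $\mu=3,4$ pairs before switching to Noether-type uniqueness theorems there. Both arguments, combined with the inclusion $\mathrm{Im}(\mu_D)\subseteq\mathcal H_g$ forced by $F\cdot D=2$, yield that $\mathrm{Im}(\mu_D)$ is dense in $\mathcal H_g$ of dimension $2g-1$.
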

\begin{proof} We have $X=\Bbb F_a$ and $D=2E+(a+g+1)F$. Consider the exact sequence 
$0\to T_D\to T_X|_D\to N_{D,S}\to 0$. To prove the assertion it suffices to prove that
\begin{equation}\label{eq:4}
\dim(\text{Im}(H^0(D,N_{D,X})\to H^1(D, T_D)))= 2g-1.
\end{equation} 
We have  $h^0(D,N_{D,X})=r=3g+5$ and $h^ 0(D, T_D)=0$. So \eqref {eq:4} is equivalent to
$h^0(T_X|_D)=g+6$.  Consider the structure morphism $f: X\to \Bbb P^ 1$ and let $T_f$ be the relative tangent
sheaf. We have the exact sequence $0\to T_f|_D\to T_X|_D\to\mathcal O_X(2F)|_D\to 0$, from which
 $\deg (T_f|_D)=2g+2$, hence $h^1(D,T_f|_D)=0$ and therefore  $h^0(D,T_X|_D)=h^0(D,T_f|_D)+h^0(D,\mathcal O_D(2F))=g+6$,
 as needed. \end{proof}

Next we consider $\mu$--Castelnuovo pairs as in part (ii) of Proposition \ref {prop:list} 
with $3\le \mu \le 4$. 
The analysis of the moduli maps in these cases could be done, as in
the proof of Proposition \ref {prop:modcast}, by studying the coboundary map in \eqref {eq:4}. 
There is however a quicker way, which parallels  Noether's theorem for plane curves. 

\begin{prop}\label{prop:modmu} Let $(X,D)$ be a $\mu$--Castelnuovo pair as in part (ii) of Proposition \ref {prop:list} 
with $3\le \mu \le 4$ and $g\ge 4$. 
Then two smooth curves $C,C'\in \vert D\vert$ are isomorphic if and only
if there is an automorphism $\omega$ of $X\cong \Bbb F_a$ such that $C'=\omega(C)$.
Accordingly 
\begin{equation}
 \label{eq:dimmod}
\mu(X,D)=\left\{\begin{array}{rl}
2g+1\quad  \text{if}\quad  \mu=3   \quad  \text{and}\quad  a=0   \\
2g+2-a\quad  \text{if}\quad  \mu=3   \quad  \text{and}\quad  a>0   \\
\frac {5g}3+3  \quad \text{if}\quad  \mu=4   \quad  \text{and}\quad  a=0   \\
\frac {5g}3+4-a \quad \text{if}\quad  \mu=4   \quad  \text{and}\quad  a>0   \\
\end{array}\right.
\end{equation}
In particular, for $\mu=3$ and $0\le a\le 1$, the image of $\mu_D$ is the whole
trigonal locus. 
\end{prop}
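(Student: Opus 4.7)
The plan is to mirror the Noether-type argument used in Proposition \ref{prop:modver}: for each pair in part (ii) of Proposition \ref{prop:list} I would first prove that every abstract isomorphism $\phi:C\to C'$ between smooth members of $\vert D\vert$ is induced by an automorphism of $X$, and then read off $\mu(X,D)$ by comparing $\dim\vert D\vert$ with the dimension of the subgroup of $\mathrm{Aut}(X)$ preserving $\vert D\vert$. The ``if'' direction of the isomorphism claim is trivial.

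The central step is to show that the pencil $\vert F\vert$ cuts out a base-point-free $g^1_\mu$ on $C$ which is \emph{intrinsically} determined by the abstract curve $C$, so that any $\phi:C\to C'$ necessarily preserves it. For $\mu=3$ this is the classical uniqueness of the $g^1_3$ on a non-hyperelliptic trigonal curve of genus $g\ge 5$. For $\mu=4$ the inequality \eqref{eq:g} appearing in the proof of Theorem \ref{thm:Castthm} forces $g\ge 14$, and here lies the main technical obstacle: one must rule out the existence of a second $g^1_4$ on the general $C\in\vert D\vert$. I would argue this via a Castelnuovo--Severi type inequality: a second independent $g^1_4$ would produce a map $C\to\Bbb P^1\times\Bbb P^1$ of bidegree $(4,4)$, whose image has arithmetic genus at most $9$, contradicting $g\ge 14$.

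Once uniqueness of the $g^1_\mu$ is in place, $\phi$ induces an isomorphism $\Bbb P^1\to\Bbb P^1$ between the parameter spaces of the two rulings. For each $p\in\Bbb P^1$ the fibre $F_p$ meets $C$ in $\mu\ge 3$ points whose images under $\phi$ lie on $F_{\phi(p)}\cap C'$, so, since three points on $\Bbb P^1$ determine a unique projectivity, there is a unique isomorphism $F_p\to F_{\phi(p)}$ extending this partial bijection. Patching these over $\Bbb P^1$ yields a birational self-map of $X$, necessarily an automorphism $\omega$ with $\omega(C)=C'$. Consequently the orbits of $\mathrm{Aut}(X)\cap\mathrm{Stab}(\vert D\vert)$ on $\vert D\vert$ are precisely the isomorphism classes of curves; since the stabilizer in $\mathrm{Aut}(X)$ of a general $C$ is trivial, one has
\begin{equation*}
\mu(X,D)=\dim\vert D\vert-\dim\bigl(\mathrm{Aut}(X)\cap\mathrm{Stab}(\vert D\vert)\bigr).
\end{equation*}
Using $\dim\mathrm{Aut}(\Bbb F_0)=6$ (the factor-swap is excluded from $\mathrm{Stab}(\vert D\vert)$ since $\alpha>\mu$ in every case of our list) and $\dim\mathrm{Aut}(\Bbb F_a)=a+5$ for $a\ge 1$, together with the value of $r=\dim\vert D\vert$ read off from Proposition \ref{prop:list}(ii), the formulas \eqref{eq:dimmod} follow by a direct calculation in each of the four cases.

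For the last assertion, the Hurwitz-space count gives $\dim\mathcal T_g=2g+1$ for the trigonal locus $\mathcal T_g\subset\mathcal M_g$, which matches $\mu(X,D)=2g+1$ for $\mu=3$ and $a\in\{0,1\}$. Since $\mathcal T_g$ is irreducible and the Maroni strata of invariants $0$ (for $g$ even) and $1$ (for $g$ odd) are Zariski-dense in $\mathcal T_g$, the image of $\mu_D$ for the appropriate $a\in\{0,1\}$ is Zariski-dense, whence $\mathrm{Im}(\mu_D)=\mathcal T_g$ on closures.
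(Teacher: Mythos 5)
Your strategy (show the $g^1_\mu$ cut by the ruling is intrinsic, reconstruct an automorphism of $\Bbb F_a$ fibrewise, then compute $\mu(X,D)=r-\dim\mathrm{Aut}(\Bbb F_a)$) differs from the paper's, which re-embeds the curves projectively: for $\mu=3$ via $\vert K+D\vert$ as canonical curves on a rational normal scroll in $\Bbb P^{g-1}$, and for $\mu=4$ via a suitable $\vert E+\beta F\vert$ as extremal Castelnuovo curves on a scroll in $\Bbb P^s$ (with the low-$a$, low-$\alpha$ cases handled separately as complete intersections on a quadric or nodal plane sextics). The paper then quotes Noether/Accola/Ciliberto--Lazarsfeld to upgrade abstract isomorphism to projective equivalence, hence to an automorphism of the scroll and of $\Bbb F_a$; the final dimension count agrees with yours.

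Your route has two genuine gaps, both concentrated in the case $\mu=4$. First, uniqueness of the $g^1_4$: the Castelnuovo--Severi count only excludes a second $g^1_4$ for which the product map $C\to\Bbb P^1\times\Bbb P^1$ is birational onto its image. If that map has degree $2$ onto a curve of bidegree $(2,2)$, both pencils are composed with an involution of $C$ (so $C$ is hyperelliptic or bielliptic), and the bound $p_a\le 9$ says nothing; you must separately exclude that the general $C\in\vert 4E+\alpha F\vert$ carries such an involution with the ruling $g^1_4$ composite with it. Second, the fibrewise patching: a fibre meets $C$ in \emph{four} points, and a projectivity of $\Bbb P^1$ carrying one ordered $4$-point set to another exists only if the cross-ratios agree, so ``there is a unique isomorphism $F_p\to F_{\phi(p)}$ extending this partial bijection'' is false as stated for $\mu=4$. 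Even for $\mu=3$, where three points do determine the fibrewise projectivity, what you obtain a priori is only a fibre-preserving birational self-map of $\Bbb F_a$ that is linear on general fibres; such maps include compositions of elementary transformations and need not be biregular, so ``necessarily an automorphism'' requires an extra argument (e.g., comparing the classes and self-intersections of $C$ and $C'$). These are precisely the difficulties the paper's projective-embedding argument bypasses. The remaining ingredients of your proof (finiteness of the generic stabilizer, exclusion of the factor swap on $\Bbb F_0$, the Maroni-density argument for the final assertion) are sound.
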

\begin{proof} Consider the case $\mu=3$.  Then $D\equiv 3E+\alpha F$ with $\alpha\ge 3a$. 
Set $H\equiv D+K=E+(\alpha-a-2)F$. Then  $\phi_{\vert H\vert}$ is a morphism
mapping $X$ to a rational normal scroll $S$  in
$\Bbb P^ {g-1}$, and the smooth curves
in $\vert D\vert$ are mapped to canonical curves. Two of such curves $C, C'$ are isomorphic if and
only if there is a projective transformation $\omega$ of $\Bbb P^ {g-1}$ such that
$C'=\omega(C)$. Since $\omega(S)=S$, the first assertion follows. 

Note that $r=\tau(2,g)-1=2g-7$. The automorphisms group of $\Bbb F_a$ has dimension
$a+5$ if $a>0$ and $6$ if $a=0$, which explains the first two lines of \eqref {eq:dimmod}.

Look now at the case $\mu=4$.  Assume first $a\ge 3$.
Then $D\equiv 4E+\alpha F$ with $\alpha\ge 4a$. 
Set $H\equiv E+\beta F$, with $\beta$ verifying 
 \begin{equation}
 \label{eq:beta}
 \alpha\le 4\beta\le 2\alpha -2a-2.
\end{equation}
Since $\alpha\ge 4a$ and $a\ge 3$, certainly such a $\beta$ exists.
In addition $\beta\ge \alpha/4\ge a$, hence  $\phi_{\vert H\vert}$  maps $X$ to a rational normal
scroll  in $\Bbb P^ s$, with $s=2\beta-a+1$.
The curves in $\vert D\vert$ map to curves of degree $n= 4\beta+\alpha-4a$. 
Note that $n-1=3(s-1)+\epsilon$, with $\epsilon=\alpha-a-2\beta-1$.
By \eqref {eq:beta}, one has $0\le \epsilon<s-1$. Then
the maximal genus of 
curves of degree $n$ in $\Bbb P^ s$ is 
$3\alpha-6n-3=g$ (see, e.g.,  \cite [p. 527] {GH}).
Hence the smooth curves in $\vert D\vert$ are Castelnuovo curves in $\Bbb P^ s$.
By a result of Accola (see \cite {Ac} and also \cite [Teorema (2.11)]{C1}), 
two smooth curves $C,C'\in \vert D\vert$ are isomorphic if and only if they
are projectively equivalent in $\Bbb P^ s$. The conclusion is as for $\mu=3$. 

In case $0\le a\le 2$, the same argument as above applies if $\alpha\ge 5+2a$, 
since in this case still there is an integer $\beta$ verifying \eqref {eq:beta}.
So we are left to consider the cases $0\le a\le 2$ with $\alpha\le 4+2a$.
Then $\alpha= 4+2a$ by Proposition
\ref {prop:list}, (ii), for $a=0,2$.
In case $a=1$ also $\alpha= 4+2a=6$, because $\mu=4, \alpha=5$
does not correspond to a $4$--Castelnuovo pair.
The argument is similar
to the above and therefore we will be brief. For $a=0,2$
we map $\Bbb F_a$ to a quadric $S$ in $\Bbb P^ 3$. Then the curves in $\vert D\vert$ are complete intersections of $S$ with a surface
of degree $4$. Again two smooth curves $C,C'\in \vert D\vert$ are isomorphic if and only if they
are projectively equivalent in $\Bbb P^ 3$ (see \cite{CL} or \cite [Corollario (4.8)]{C1}).
If $a=1$ 
then $\Bbb F_1$ birationally maps to the plane by contracting $E$ and $\vert D\vert $ is the proper transform of the linear
system of curves of degree 6 with a double base point. Two such curves with only one node
are birational if and only if they are projectively equivalent in $\Bbb P^ 2$ (see \cite [Osservazione (2.19)]{C1})
and the conclusion is as above.  \end{proof}

The last assertion in Proposition \ref {prop:modmu} is no news: indeed it goes back to Maroni \cite {Ma}.

\subsection{Number of moduli (II)}\label{ssec:moduli2}
In this section we  answer  Problem \ref {prob:moduli}.

\begin{thm}\label{thm:moduli} Let $(X,D)$ be a minimal pair with $D$ a 
smooth curve of genus $g\ge 22$. Then $\mu(X,D)\le 2g+1$
and equality holds if and only if $(X,D)$ is a $3$--Castelnuovo pair with 
$X\cong \Bbb F_a$ and $0\le a\le 1$,
in which case the image of $\vert D\vert$ via $\mu_D$ is the trigonal
locus in $\mathcal M_g$.
\end{thm}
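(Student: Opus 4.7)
The plan is to combine the trivial bound $\mu(X,D)\le r$ with Castelnuovo's Theorem~\ref{thm:Castthm} and the Castelnuovo-pair classification of Proposition~\ref{prop:list}. Assume $\mu(X,D)\ge 2g+1$. Then $r\ge 2g+1>g$, so Remark~\ref{rem:g} yields that $X$ is rational and $|D|$ is non-special. If $D$ were hyperelliptic, the image of $\mu_D$ would lie in the hyperelliptic locus $\mathcal H_g$ of dimension $2g-1$, contradicting the assumption. Since the discussion preceding Theorem~\ref{thm:Castineq} shows that $|D|$ composed with an involution forces $D$ to be hyperelliptic, we conclude that $|D|$ is not composed with an involution, so Castelnuovo's Theorem~\ref{thm:Castthm} applies to $(X,D)$.

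Let $\mu_0$ be the smallest integer with $\tau(\mu_0,g)\le r$. Theorem~\ref{thm:CastEnr} gives $r\le 3g+5<3g+6=\tau(1,g)$, so $\mu_0\ge 2$; a direct calculation yields $\tau(4,g)=(3g+23)/2\le 2g+1$ as soon as $g\ge 21$, so the inequality $r\ge 2g+1$ forces $\mu_0\le 4$. Hence $(X,D)$ is a $\mu_0$-Castelnuovo pair with $\mu_0\in\{2,3,4\}$, and Proposition~\ref{prop:list} supplies the complete list of possible models.

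I then run through the subcases. The planar subcases of Proposition~\ref{prop:list}(i) are eliminated either on genus grounds ($g\le 21$ for the small-$m$ Veronese and $\Bbb F_1$-pairs) or because $r<2g+1$ (as for the $m=9$ Veronese, where $g=28$ but $r=54<57=2g+1$). The $(a,g)$-Castelnuovo subcase (Proposition~\ref{prop:list}(ii) with $\mu_0=2$) has $D$ hyperelliptic, already ruled out. In the $\mu_0=4$ scroll subcase, Proposition~\ref{prop:modmu} gives $\mu(X,D)\le\tfrac{5g}{3}+3<2g+1$ for all $g\ge 7$. The sole remaining possibility is Proposition~\ref{prop:list}(ii) with $\mu_0=3$: $X\cong\Bbb F_a$ and $D\equiv 3E+\alpha F$, $g=2\alpha-3a-2$. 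Here Proposition~\ref{prop:modmu} yields $\mu(X,D)=2g+1$ for $a=0$ and $\mu(X,D)=2g+2-a$ for $a\ge 1$, so $\mu(X,D)\le 2g+1$ throughout, with equality precisely when $a\in\{0,1\}$. In that case the final assertion of Proposition~\ref{prop:modmu} identifies the image of $\mu_D$ with the trigonal locus in $\mathcal M_g$.

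The principal challenge is the numerical bookkeeping in the previous paragraph, most notably verifying the strict inequality $\tfrac{5g}{3}+3<2g+1$ that rules out the $\mu_0=4$ case and the estimate $\tau(4,g)\le 2g+1$ that pins $\mu_0$ to $\{2,3,4\}$. The threshold $g\ge 22$ enters precisely through the latter, and loosening it would require extending Proposition~\ref{prop:list} to higher values of~$\mu$.
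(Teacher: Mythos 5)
Your overall strategy is the same as the paper's: reduce to the hypotheses of Castelnuovo's Theorem \ref{thm:Castthm}, locate $(X,D)$ as a $\mu$-Castelnuovo pair with $2\le\mu\le 4$ (this is where $g\ge 22$, or rather $g\ge 21$, enters, exactly as you say), and then read off the numbers of moduli from Propositions \ref{prop:list}, \ref{prop:modver}, \ref{prop:modcast} and \ref{prop:modmu}. Your elimination of the planar subcases on genus grounds and of the $m=9$ Veronese via $r=54<57=2g+1$ is in fact more explicit than what the paper writes, and the handling of the $\mu_0=2,3,4$ scroll cases matches the paper's.

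However, there is a genuine gap in your initial reduction, in two related places. First, Remark \ref{rem:g} does \emph{not} yield that $X$ is rational: from $r>g$ it only gives that $X$ has negative Kodaira dimension, and such a surface may be birationally ruled over a curve of genus $q>0$. The paper disposes of this case separately: if $q>0$ there is a surjection $f:X\to C$ onto a curve of genus $q$, every curve in $\vert D\vert$ maps nontrivially to $C$, and then $\mu(X,D)\le 2g-2$ by \cite{CGT}. Second, your claim that ``$\vert D\vert$ composed with an involution forces $D$ to be hyperelliptic'' is not correct in general: an involution $\iota$ of $X$ fixing every curve of $\vert D\vert$ exhibits the general $D$ as a double cover of $D/\iota$, which need not be rational; the discussion preceding Theorem \ref{thm:Castineq} only treats the special situation where the involution comes from a pencil $\vert L\vert$ with $L\cdot D=2$. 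The paper again splits into the rational-quotient case ($D$ hyperelliptic, so $\mu(X,D)\le 2g-1$) and the irrational-quotient case, where the same $\le 2g-2$ bound from \cite{CGT} applies. Both omissions concern the same missing ingredient --- the bound on the number of moduli of curves all admitting a nonconstant map to a fixed curve of positive genus --- and without it your argument does not rule out irregular ruled surfaces or involutions with irrational quotient before invoking Theorem \ref{thm:Castthm}. Once that ingredient is supplied, the rest of your case analysis is correct.
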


\begin{proof} By Remark \ref {rem:g}, we may assume $X$ has negative Kodaira dimension,
otherwise $r\le g$. 
If $q>0$, there is a curve $C$ of genus $q$ and a surjective morphism $f: X\to C$,
hence all curves in $\vert D\vert$ map to $C$ and therefore $\mu(X,D)\le 2g-2$
(see \cite {CGT}). So we may assume $q=0$.

If $\vert D\vert$ is composed with an involution, then 
the general curve $D\in \vert D\vert$ has a non--constant morphism $D\to C$
to a curve. If $C$ is rational, then $D$ is hyperelliptic and 
$\mu(X,D)\le 2g-1$, if $C$ is irrational, one has $\mu(X,D)\le 2g-2$ as above. 
Thus, if $\mu(X,D)\ge 2g$ we may assume that $(X,D)$ verifies
the hypotheses of Castelnuovo's Theorem \ref {thm:Castthm}.

By Castelnuovo--Enriques Theorem \ref {thm:CastEnr}, we may assume that $r\le 3g+5=\tau(1,g)-1$.
If $r \ge 2g+8=\tau(2,g)$, then $(X,D)$ is a $2$--Castelnuovo pair. By Propositions \ref {prop:list},
\ref {prop:modver}  and \ref {prop:modcast},  the image of $\mu_D$ is $\mathcal H_g$ and $\mu(X,D)=2g-1$.
If $\tau(2,g)-1\ge r\ge \tau(3,g)= \frac {5g+1}3 +9$, then $(X,D)$ is a $3$--Castelnuovo pair.
By Propositions \ref {prop:list}, \ref {prop:modver}  and \ref {prop:modmu}, 
the maximum of $\mu(X,D)$ is attained if $(X,D)$  is as in (ii) of Proposition \ref {prop:list} with $0\le a\le 1$.
In this case $\mu(X,D)=2g+1$ and  the image of $\vert D\vert$ via $\mu_D$ is the trigonal
locus. If $\tau(3,g)>g\ge \tau(4,g)=\frac {3g+1}2+11$, then
the maximum of $\mu(X,D)$ is $\frac {5g}3+3$ (see Proposition \ref {prop:modmu})
which is smaller than $2g+1$ if $g\ge 7$. Finally, if $r<\frac {3g+1}2+11$, then also
$\mu(X,D)\le \frac {3g+1}2+11$ and this is smaller than $2g+1$ if $g\ge 22$. \end{proof}

\end{document}